\definecolor{Black}{cmyk}{0,0,0,1}
\definecolor{OrangeRed}{cmyk}{0,0.6,1,0}            % half magenta only, full yellow
\definecolor{DarkBlue}{cmyk}{1,1,0,0.20}
\definecolor{myblue}{rgb}{0.66,0.78,1.00}
\definecolor{Violet}{cmyk}{0.79,0.88,0,0}
\definecolor{Lavender}{cmyk}{0,0.48,0,0}
\newtheorem{theorem}{Theorem}[section]
\newtheorem{corollary}[theorem]{Corollary}
\theoremstyle{definition}
\newtheorem{defn}[theorem]{Definition}
\newcommand{\C}{\mathbb{C}}
\newcommand{\bea}{\begin{eqnarray*}}
\newcommand{\eea}{\end{eqnarray*}}
\newcommand{\htop}{h_{\operatorname{top}}}
\newcommand{\ra}{\rightarrow}
\numberwithin{equation}{section}
\begin{document}
\title[Infinite entropy for transcendental functions]{Infinite entropy for transcendental entire functions with an omitted value}

\author[A.M. Benini]{Anna Miriam Benini$^{\dag}$}
\author[J.E.  Forn{\ae}ss ]{John Erik Forn{\ae}ss$^{*}$}
\author[H. Peters]{Han Peters}

\subjclass[2010]{30D20, 30D35, 37F10}

\date{\today}

\keywords{Distribution of values, Transcendental functions, Entropy}

\thanks{$^{\dag}$  Supported by the by  the  Horizon 2020 Marie Curie IEF grant n. 703269 COTRADY}
\thanks{$^{*}$  Supported by the NFR grant no. 10445200}
\address{ A.M. Benini: Departamento de  Matem\'atica y Inform\'atica\\
Universitat de Barcelona    \\
 Spain} \email{ambenini@gmail.com}
\address{ H. Peters: Korteweg de Vries Institute for Mathematics\\
University of Amsterdam\\
the Netherlands} \email{hanpeters77@gmail.com}
\address{ J.E. Fornaess: Department of Mathematical Sciences\\
NTNU Trondheim, Norway} \email{john.fornass@ntnu.no}

\dedicatory{Dedicated to L\^{e}  V\u{a}n Thi\^{e}m on the occasion of his centenary.}

\begin{abstract}
We prove that entire transcendental holomorphic functions with an omitted value have infinite entropy. A proof for general transcendental entire functions will be given in an upcoming paper.
\end{abstract}

\maketitle

 \section{Introduction}

Understanding   how many times a transcendental meromorphic functions takes a given value on a circle of radius $r$ is one of the fundamental problems in the deep theory of Nevanlinna.
In one of his earliest papers \cite{CVT1949}, Le Van Thiem worked on the distribution of values of meromorphic functions in one complex variable, partially solving the inverse problem in the value distribution theory for meromorphic functions. In this paper we prove results regarding the distribution of values for entire functions with an omitted singular value, which give as corollary that such functions have infinite entropy.

By Picard's theorem, an entire transcendental function $f$ takes every value in $\C$ infinitely many times, except for at most one value, which, if it exists, is called the exceptional value. When the exceptional value is not taken at all, it is called an omitted value. By Montel's Theorem, if $z_0$ is a repelling periodic point and $U$ is an arbitrary neighborhood of $z_0$ then the family $\{f^n\}$ takes infinitely times each value except for the exceptional value. On the other hand, Montel's Theorem gives no quantitative estimates on how many iterates are needed  before a given value is taken again.
 Wiman Valiron theory tells that, for each $n$ and  for all values of $r>0$ sufficiently large outside  a set of finite logarithmic measure, there exists at least one point of modulus $r$, and   a neighborhood of it, whose image covers   a larger annulus at least $n$ times. However, no information is given about whether and when the image of the annulus gets back to covering the original neighborhood.

Our main theorem is the following.

\begin{theorem}\label{thm:main theorem}
Let $f$ be a transcendental entire function with an omitted value $\beta$, and let $n \in \mathbb N$. For $R>0$ define the annulus
$$
A_R:=\{R/2<|z-\beta|<2R\}.
$$
Then there exists $\delta>0$ and  $R=R(n)$ such that $f(A_R)\supset A_R$ and every point in $A_R$ has at least $n$ preimages in $A_R$ which are at Euclidean distance at least $\delta$ from each other.%under $f$, counted with multiplicity.
\end{theorem}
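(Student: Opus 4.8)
The plan is to reduce everything, via the omitted value, to a single covering statement for a lifted map, after which the separation of preimages is automatic.

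First I would normalize $\beta=0$, so that $f$ is a non-vanishing entire function; lifting through the universal covering $\exp\colon\C\to\C\setminus\{0\}$ produces an entire $g$ with $f=e^{g}$, and $g$ is non-constant because $f$ is transcendental. Put $s=\log R$. Since $w_0\in A_R$ exactly when $\bigl|\log|w_0|-s\bigr|<\log 2$, and $f(z)=w_0$ exactly when $g(z)=\log w_0+2\pi i k$ for some $k\in\Z$, the value $w_0$ has a preimage $z\in A_R$ iff $g$ hits one of the points $\log w_0+2\pi i k$ at a point of $A_R$. These points are spaced $2\pi$ apart on the vertical line $\re\zeta=\log|w_0|\subset\{|\re\zeta-s|<\log 2\}$. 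Consequently the whole theorem follows from the single assertion
\[
 g(A_R)\ \supseteq\ Q:=\{\zeta\in\C:\ |\re\zeta-s|<\log 2,\ |\im\zeta|<\pi(n+1)\},
\]
a rectangle of width $2\log 2$ and height $2\pi(n+1)$ sitting over the line $\re\zeta=\log R$: indeed $Q\subseteq g(A_R)$ forces $f(A_R)\supseteq\exp(Q)= A_R$, and for each $w_0\in A_R$ at least $n$ of the points $\log w_0+2\pi i k$ lie in $Q$, each with a preimage in $A_R$; distinct $k$ give distinct values of $g$, hence distinct preimages.

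Next I would locate the relevant level. By the mean value property the average of $\re g$ over any circle $|z|=r$ equals the constant $\re g(0)$, while transcendence gives $\log M(r,f)=\max_{|z|=r}\re g\to\infty$ faster than $\log r$. Hence for all large $R$ one has $\min_{|z|=R}\re g\le\re g(0)<\log R<\log M(R,f)$, so the level set $\{|f|=R\}=\{\re g=\log R\}$ genuinely threads $A_R$; moreover no super-level set $\{\re g>c\}$ has a bounded component (maximum principle), so the component of $\{\re g>\log R\}$ meeting $\{|z|=R\}$ is an unbounded tract crossing $A_R$, along whose bounding level arcs the conjugate $\im g=\arg f$ is strictly monotone.

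The heart of the matter, and the step I expect to be hardest, is to show that for a suitable $R=R(n)$ the values of $\arg f=\im g$ on the level set $\{|f|=R\}\cap A_R$ fill an interval of length at least $2\pi(n+1)$; this is precisely the vertical extent needed to place $Q$ inside $g(A_R)$, and equivalently says that $f$ winds at least $n+1$ times around the circle $\{|w|=R\}$ using only preimages confined to $A_R$. This is exactly where the omitted value is indispensable: it lets us work at the intermediate level $|f|=R$ -- over which, as the introduction notes, Wiman--Valiron theory gives no control and no guarantee of ``return'' -- rather than only near $|f|=M(r,f)$. To obtain the oscillation I would combine the tract structure above with a quantitative estimate, for instance the coarea identity $\int_{A_R}|g'|^2\,dA=\int_{\R}V(t)\,dt$, where $V(t)$ is the total variation of $\arg f$ on $\{\re g=t\}\cap A_R$, to produce a level carrying large variation, together with an argument-principle count along $\partial(\Omega\cap A_R)$ that equates the number of confined preimages with the sweep of $\arg f$ along the level boundary; the free parameter $R=R(n)$ is then chosen using the transcendental growth. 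The model $f=e^{z}$ shows the mechanism transparently: there $\{|f|=R\}\cap A_R$ is a pair of vertical segments of length $\sim R$ on which $|g'|\equiv 1$, so $\arg f$ sweeps $\sim R\gg2\pi n$; the difficulty in general is to guarantee a comparable sweep without the explicit geometry.

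Finally, granting $g(A_R)\supseteq Q$, the conclusion follows: every $w_0\in A_R$ has at least $n$ preimages in $A_R$, and these are uniformly separated. For if not, sequences of preimages $z_k^{(m)},z_j^{(m)}$ (with $k\neq j$ from the finite index range determined by $Q$) of points $w_0^{(m)}\in A_R$ would approach a common limit $z^{\ast}\in\ov{A_R}$; but then continuity of $g$ would force $g(z_k^{(m)})-g(z_j^{(m)})\to 0$, contradicting $g(z_k^{(m)})-g(z_j^{(m)})=2\pi i(k-j)$, which has modulus at least $2\pi$. Compactness of $\ov{A_R}$ thus yields a uniform $\delta=\delta(n)>0$, completing the proof.
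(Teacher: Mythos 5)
Your outer reduction is fine: if $g(A_R)\supseteq Q$ for the exponential lift $g$, then indeed $f(A_R)\supseteq A_R$, every $w_0\in A_R$ acquires $n$ preimages in $A_R$, and your compactness argument for the uniform $\delta$ at the end is correct. But the proposal has a genuine gap, and it sits exactly at the step you yourself flag as the hardest: the inclusion $g(A_R)\supseteq Q$ is never proved. That inclusion is not a technical lemma --- it \emph{is} the theorem --- and what you offer for it is a list of tools rather than an argument. Concretely: (i) the coarea identity helps only if you can bound $\int_{A_R\cap\{|\re g-\log R|<\log 2\}}|g'|^2$ from below by something of order $n$, and no such bound is derived; the facts you do establish (the level set $\{\re g=\log R\}$ meets the circle $\{|z|=R\}$, super-level components are unbounded) are purely qualitative and are perfectly consistent with $\arg f$ sweeping an arbitrarily short interval on $\{|f|=R\}\cap A_R$. (ii) Even granting that $\im g$ fills an interval of length $2\pi(n+1)$ on a level arc, that only places a vertical \emph{segment} inside the open set $g(A_R)$, not the rectangle $Q$ of width $2\log 2$; the image could a priori be a thin neighborhood of that segment, so passing from the segment to $Q$ (or running the oscillation claim for every level $t$ with $|t-\log R|<\log 2$) requires a further argument you do not give. (iii) Monotonicity of $\im g$ along level arcs fails at critical points of $g$, so large total variation $V(t)$ does not by itself mean the values fill a long interval. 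In short, the central covering claim is unsupported, and nothing in the proposal shows how the hypothesis of an omitted value would force it.

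For comparison, the paper proves the covering by a different, short, and self-contained device that avoids level-set geometry entirely: since $f$ omits $0$, it has an entire $n$-th root $g$ with $f=g^n$, and one shows $g(A_R)\supseteq A_R^{1/n}:=\{(R/2)^{1/n}<|z|<(2R)^{1/n}\}$, which yields $f(A_R)\supseteq A_R$ and produces the $n$ separated preimages from the $n$ branches of $f^{1/n}$. If some $\zeta\in A_R^{1/n}$ were omitted by $g$ on $A_R$, then $h=g/\zeta$ would map $A_R$ into the hyperbolic surface $\C\setminus\{0,1\}$; choosing $w_m,w_M$ on $\{|z|=R\}$ with $|f(w_m)|<1$ and $|f(w_M)|>R^{2n}$ (possible for all large $R$ since both $f$ and $1/f$ are transcendental entire), one gets $|h(w_m)|<1$ while $|h(w_M)|$ is of order $R$, so by completeness of the hyperbolic metric on $\C\setminus\{0,1\}$ the distance between $h(w_m)$ and $h(w_M)$ tends to infinity with $R$, whereas the hyperbolic distance between $w_m$ and $w_M$ inside $A_R$ is bounded independently of $R$ by scale invariance --- contradicting the Schwarz--Pick lemma. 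Note how the omitted value enters there: not through tracts or winding, but through the existence of the single-valued root $g$ and the two omitted values of $h$. To salvage your exponential-lift route you would need a quantitative winding estimate of comparable strength, which is precisely what is missing.
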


Theorem~\ref{thm:main theorem} has the following corollary:

\begin{theorem}\label{thm:Infinite entropy}
Let $f$ be a transcendental entire function with an omitted value. Then $f$ has infinite topological entropy.
\end{theorem}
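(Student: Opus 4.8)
The plan is to extract from Theorem~\ref{thm:main theorem} a horseshoe-type lower bound on entropy. Recall that, via Bowen's definition, the topological entropy of a continuous map can be computed using separated sets: writing $d_k(x,y)=\max_{0\le j<k}|f^j(x)-f^j(y)|$ for the Bowen metric and letting $s_k(\e,\Lambda)$ denote the maximal cardinality of a $(k,\e)$-separated subset of a compact forward-invariant set $\Lambda$, one has $\htop(f)\ge \htop(f|_\Lambda)\ge \limsup_{k\to\infty}\frac1k\log s_k(\e,\Lambda)$ for every $\e>0$. Since $n\in\N$ is arbitrary in Theorem~\ref{thm:main theorem}, it suffices to produce, for each $n$, a compact forward-invariant set $\Lambda$ on which $f$ has at least $n^k$ orbit segments of length $k$ that are pairwise $\delta$-separated; this yields $\htop(f)\ge\log n$, and letting $n\to\infty$ gives the theorem.

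Fix $n$ and let $\delta>0$ and $R=R(n)$ be as in Theorem~\ref{thm:main theorem}, so that $f(A_R)\supset A_R$ and every point of $A_R$ has at least $n$ preimages in $A_R$ that are pairwise at distance $\ge\delta$. I first record that the set
\[
\Lambda:=\{z\in \ov{A_R}: f^j(z)\in\ov{A_R}\text{ for all }j\ge 0\}=\bigcap_{j\ge 0}f^{-j}(\ov{A_R})
\]
is compact (a closed subset of the bounded annulus) and forward invariant. It is also nonempty: starting from any $w_0\in A_R$ and repeatedly choosing preimages in $A_R$ produces a backward orbit $w_0,w_{-1},w_{-2},\dots$ contained in $A_R$, and any accumulation point $w_\ast\in\ov{A_R}$ of the sequence $(w_{-k})_k$ satisfies $f^m(w_\ast)\in\ov{A_R}$ for every $m\ge0$, since $f^m(w_{-k})=w_{-k+m}\in A_R$ whenever $k\ge m$. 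Hence $w_\ast\in\Lambda$.

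Now fix a base point $w\in\Lambda$ and build the preimage tree of depth $k$ rooted at $w$: to each node $z\in A_R$ assign as its $n$ children a choice of $n$ preimages of $z$ in $A_R$ that are pairwise $\delta$-separated, as furnished by Theorem~\ref{thm:main theorem}. Each of the $n^k$ paths from the root to depth $k$ terminates in a leaf $x$ with $f^k(x)=w$ and $f^j(x)\in A_R$ for $0\le j\le k$; since $w\in\Lambda$, also $f^j(x)=f^{j-k}(w)\in\ov{A_R}$ for $j\ge k$, so every leaf lies in $\Lambda$. Distinct paths yield distinct leaves: once two branches separate at some node, their two representatives are preimages of two \emph{distinct} points and hence remain distinct at every earlier level. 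Moreover, given two distinct leaves $x,x'$, let $m$ be the largest index with $f^{m+1}(x)=f^{m+1}(x')$ but $f^m(x)\neq f^m(x')$; then $f^m(x)$ and $f^m(x')$ are two of the chosen $\delta$-separated preimages of the common point $f^{m+1}(x)$, so $|f^m(x)-f^m(x')|\ge\delta$ with $0\le m\le k-1$. Thus the $n^k$ leaves form a $(k,\delta)$-separated subset of $\Lambda$, giving $s_k(\delta,\Lambda)\ge n^k$ and therefore $\htop(f)\ge\htop(f|_\Lambda)\ge\log n$.

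The substance of the argument is entirely contained in Theorem~\ref{thm:main theorem}; the only point requiring care is that $A_R$ is not invariant --- it merely covers itself --- which is why the estimate is carried out on the compact forward-invariant set $\Lambda$ rather than on the annulus itself. The combinatorial separation of the $n^k$ leaves is the standard horseshoe mechanism, and the passage $n\to\infty$ then forces infinite entropy.
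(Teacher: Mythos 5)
Your core mechanism --- the depth-$k$ preimage tree whose $n^k$ leaves are pairwise $(k,\delta)$-separated --- is exactly the paper's argument (the paper takes $E_0=\{y\}$ with $y\in A_R$ and recursively chooses $n$ pairwise $\delta$-separated preimages, obtaining $(j,\delta)$-separated sets of cardinality $n^j$ inside $\overline{A_R}$). The combinatorial part of your proof, including the observation that branches which separate as points stay separated, is correct. However, your detour through the compact forward-invariant set $\Lambda$ introduces a genuine gap: the root $w$ of your tree lies in $\Lambda\subset\overline{A_R}$, but Theorem~\ref{thm:main theorem} furnishes $n$ separated preimages only for points of the \emph{open} annulus $A_R$. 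Your nonemptiness argument produces $w_\ast$ as an accumulation point of a backward orbit, so you only know $w_\ast\in\overline{A_R}$; nothing you have written rules out $\Lambda\cap A_R=\emptyset$, i.e.\ that $\Lambda$ is entirely contained in the two boundary circles (a compact forward-invariant set sitting on $\partial A_R$ is not excluded by the statement of Theorem~\ref{thm:main theorem}). If the root lies on $\partial A_R$, it has no children and the construction never starts; your own phrasing ``to each node $z\in A_R$'' quietly assumes away exactly this case.

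The gap is repairable in two ways, and it is instructive that the paper needs neither $\Lambda$ nor the repair. First, Definition~\ref{defn:entropy noncompact} does \emph{not} require the compact set $X$ to be forward invariant: it only requires each point of the separated set to have its length-$k$ orbit segment inside $X$. So you may root the tree at any $y\in A_R$; the $n^k$ leaves then have their orbit segments up to time $k$ inside $A_R\subset\overline{A_R}$ and are $(k,\delta)$-separated, giving $\htop(\overline{A_R},f)\ge\log n$ directly. This is precisely the paper's proof, and it is why the invariance issue you flag as ``the only point requiring care'' is in fact a non-issue under the definition the paper works with. Alternatively, if you insist on an invariant $\Lambda$, extend Theorem~\ref{thm:main theorem} to $\overline{A_R}$ by a limiting argument: for $w\in\partial A_R$ choose $w_i\in A_R$ with $w_i\to w$, pass to subsequential limits of the $n$ separated preimages of each $w_i$ to obtain $n$ preimages of $w$ in $\overline{A_R}$ pairwise at distance at least $\delta$, and run your tree in $\overline{A_R}$ with separation constant $\delta/2$. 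Either fix is short, but as written your proof has a hole precisely at the one point where it deviates from the paper.
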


The fact that   entire transcendental functions should have infinite entropy is no surprise. Indeed, it has been well known for several decades that rational maps of degree $d$ acting on the Riemann sphere  have topological entropy equal to $\log d$ (see \cite{MisiurewiczPrzytycki} and \cite{Gromov}, and independently  \cite{lyubich}).

One of the reasons why the problem of topological entropy for entire transcendental maps has not been addressed for so long is that there are several non equivalent definitions of topological entropy on non-compact metric spaces; we refer to  \cite{Hasselblatt} and the references therein for the definition of topological entropy on noncompact spaces. See also the book \cite{Wal82} for a background in ergodic theory).  Observe  that transcendental maps are not uniformly continuous on $\C$ and that they do not extend continuously to its one-point compactification, that is the Riemann sphere  $\hat{\C}$. The definition of topological entropy that we will use is the following.
\begin{defn}[Definition of topological entropy]\label{defn:entropy noncompact}
Let $f:Y \rightarrow Y$ be a self-map of a metric space $(Y, d)$.
Let $X$ be a compact subset of $Y.$ Let   $n \in \mathbb N$ and $\delta > 0$.
A  set $E\subset X$ is called \emph{$(n,\delta)$-separated}  if \begin{itemize}
\item for any  $z\in E$, its orbit $\{z, f(z),\dots, f^{n-1}(z)\}\subset X$;
\item  for any $z\neq w\in E$ there exists $k\leq n-1$ such that
$d(f^k(z),f^k(w))> \delta$.
\end{itemize}
Let $K(n, \delta)$ be the maximal cardinality of an $(n,\delta)$-separated set.
Then the \emph{topological entropy} $\htop(X,f)$ is defined as
$$
\htop(X,f):=\sup_{\delta>0}\left\{\limsup_{n\ra\infty}\frac{1}{n}\log K(n,\delta)\right\}.
$$
We define the topological entropy  $\htop(f)$ of $f$ on $Y$ as the supremum of $\htop(X,f)$ over all compact subsets $X\subset Y.$
\end{defn}

In general, this definition depends on the metric $d$. In our setting, the natural metrics on $\C$ with respect to the dynamics of transcendental entire functions are the spherical metric and the Euclidean metric. Since they are comparable on compact subsets of $\C$, both choices yield the same result with respect to Definition~\ref{defn:entropy noncompact}.
Observe that $\C$ endowed with the spherical metric  is totally  bounded and  that   our conditions for being an $(n,\delta)$-separated set are more restrictive than the conditions used to defined the Bowen compacta entropy in paragraph 2.1 in \cite{Hasselblatt}. In view of Theorem 6 there, the topological entropy is infinite also according to the other two definitions presented in \cite{Hasselblatt}.

\section{Infinite entropy for functions with no zeroes}

For $f$ entire transcendental and $R>0$ let $$M(R,f):=\sup_{|z|=R}|f(z)|$$ denote the maximum modulus of $f$.
Recall that
\begin{equation}\label{eqtn:growth of M}
\lim_{R\ra\infty}
\frac{\log M(R,f)}{\log R}=\infty.
\end{equation}

\begin{proof}[Proof of Theorem~\ref{thm:main theorem}]
Up to considering conjugation by a translation, we can assume that $\beta=0$ and hence that  $f:\C\ra\C\setminus\{0\}.$

We consider values $R>0$ sufficiently large for which there exist points $w_m, w_M \in \{|z| = R\}$ with $|f(w_m)| < 1$ and $|f(w_M)| > R^{2n}$. Since $f \neq 0$ there exists an entire function $g$ for which $f = g^n$. It follows that $|g(w_m)| < 1$ and $|g(w_M)| > R^2$.

Define the annulus
$$
A_R^{1/n} = \{\left(\frac{R}{2}\right)^\frac{1}{n} < |z| < \left(2R\right)^{\frac{1}{n}}\}.
$$
Suppose for the sake of a contradiction that $A_R^{1/n} \nsubseteq g(A_R)$. Write $\zeta$ for a point in $A_R^{1/n} \setminus g(A_R)$ and define the holomorphic function $h = g/\zeta$. Then $h$ maps $A_R$ into $\mathbb C \setminus \{0,1\}$, a hyperbolic Riemann surface.

Note that the hyperbolic distance between $w_m$ and $w_M$ in the domain $A_R$ is bounded from above by a constant independent of $R$. However, for $R$ sufficiently large it follows that $|h(w_M)|> R$, while $|h(w_m)| < 1$. By completeness of the hyperbolic metric it follows that the distance between $h(w_M)$ and $h(w_m)$ converges to infinity as $R \rightarrow \infty$. Since holomorphic maps cannot increase hyperbolic distances, we obtain a contradiction when $R$ is sufficiently large.

Thus we have proved that $A_R^{1/n} \subseteq g(A_R)$ when $R$ is sufficiently large, and thus that $A_R \subseteq f(A_R)$. The fact that there exist at least $n$ distinct preimages, with uniform bounds on the distance between these $n$ points, follows by considering the $n$ distinct branches for the function $g = f^{1/n}$.
\end{proof}

\begin{corollary}
A transcendental function omitting $0$ has infinite entropy.
\end{corollary}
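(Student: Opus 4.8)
The plan is to deduce the corollary directly from Theorem~\ref{thm:main theorem} applied with $\beta = 0$, by showing that the branched self-covering property it provides forces the existence of exponentially many separated orbits. Fix $n \in \mathbb{N}$ and let $R = R(n)$ and $\delta > 0$ be as in Theorem~\ref{thm:main theorem}, so that $f(A_R) \supset A_R$ and every point of $A_R$ admits at least $n$ preimages in $A_R$ that are pairwise at Euclidean distance $\geq \delta$. I take $X := \overline{A_R}$, a compact subset of $\mathbb{C}$, and estimate the separated-set count from below for this $X$ and this $\delta$; to avoid a clash with the branching number $n$, I write $m$ for the orbit length appearing in Definition~\ref{defn:entropy noncompact}, so that the quantity to bound is $K(m,\delta)$.

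The key construction is a symbolic coding by preimage branches. For each $w \in A_R$ fix a choice of $n$ preimages $p_1(w), \dots, p_n(w) \in A_R$ that are pairwise $\delta$-separated; this is a legitimate pointwise choice guaranteed by Theorem~\ref{thm:main theorem}. Given a word $\mathbf{i} = (i_1, \dots, i_m) \in \{1,\dots,n\}^m$, start from a fixed base point $w_0 \in A_R$ and set $w_j := p_{i_j}(w_{j-1})$ for $1 \le j \le m$, which is well defined inductively since each $w_{j-1}$ again lies in $A_R$; finally put $z_{\mathbf{i}} := w_m$. Because $f(w_j) = w_{j-1}$, the forward orbit satisfies $f^t(z_{\mathbf{i}}) = w_{m-t}$, so the points $f^0(z_{\mathbf{i}}), \dots, f^{m-1}(z_{\mathbf{i}})$ equal $w_m, \dots, w_1$ and all lie in $A_R \subset X$. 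Thus each $z_{\mathbf{i}}$ has its length-$m$ orbit contained in $X$.

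It then remains to verify separation. If $\mathbf{i} \ne \mathbf{i}'$, let $k$ be the least index with $i_k \ne i_k'$; then $w_{k-1} = w_{k-1}'$, so $w_k = p_{i_k}(w_{k-1})$ and $w_k' = p_{i_k'}(w_{k-1})$ are two distinct preimages of the same point, whence $|w_k - w_k'| \ge \delta$. At time $t = m - k \in \{0, \dots, m-1\}$ this reads $|f^{m-k}(z_{\mathbf{i}}) - f^{m-k}(z_{\mathbf{i}'})| \ge \delta$, so the orbits of $z_{\mathbf{i}}$ and $z_{\mathbf{i}'}$ are $\delta$-separated. Hence $\mathbf{i} \mapsto z_{\mathbf{i}}$ is injective and its image is an $(m,\delta)$-separated set of cardinality $n^m$, giving $K(m,\delta) \ge n^m$.

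Consequently $\htop(X, f) \ge \limsup_{m\to\infty} \frac{1}{m} \log K(m,\delta) \ge \log n$, so $\htop(f) \ge \log n$, and letting $n \to \infty$ yields $\htop(f) = \infty$. Since the substantive content is already packaged in Theorem~\ref{thm:main theorem}, I expect the only delicate point to be the directional bookkeeping: the branch index $k$ at which two words first differ corresponds to the forward time $m - k$, and one must check both that this time lies in the admissible range $\{0,\dots,m-1\}$ and that the backward construction indeed keeps the entire orbit inside the compact annulus $X$ — the latter being exactly what the self-covering $f(A_R)\supset A_R$ supplies.
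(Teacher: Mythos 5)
Your proof is correct and follows essentially the same route as the paper: both build $(m,\delta)$-separated sets of cardinality $n^m$ inside $\overline{A_R}$ by recursively selecting the $n$ pairwise $\delta$-separated preimages supplied by Theorem~\ref{thm:main theorem}, your symbolic coding being just a more explicit bookkeeping of the paper's recursive sets $E_j$. The only cosmetic difference is the strict versus non-strict inequality ($>\delta$ versus $\geq\delta$), which both you and the paper gloss over and which is fixed by replacing $\delta$ with $\delta/2$.
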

\begin{proof}
Let $n \in \mathbb N$ and let $R>0$ as constructed in the theorem. Let $\delta>0$ be such that any point $w \in A_R$ has at least $n$ preimages $z_1, \ldots , z_n$ satisfying $|z_i - z_j| > \delta$ for $i \neq j$. Let $y \in A_R$ and $E_0 = \{y\}$, and recursively choose finite subsets $E_j \subset A_R$ with the property that $E_{j+1}$ contains for each $w\in E_j$ exactly $n$ preimages $z_1, \ldots , z_n$ with $|z_i - z_j| > \delta$ for $i \neq j$, and no other points. By construction $E_j$ is $(j,\delta)$ separated, and the cardinality of $E_j$ is exactly $n^j$. It follows that
$$
h_{top}(\overline{A_R}, f) \ge \log(n),
$$
which implies that the topological entropy of $f$ is infinite.
\end{proof}

\bibliographystyle{amsalpha}
\bibliography{entropyviet}

\end{document}